\theoremstyle{plain}
\newtheorem{thm}{Theorem}
\newtheorem{defn}[thm]{Definition}
\newtheorem{rem}[thm]{Remark}
\newtheorem{prop}[thm]{Proposition}
\title{Complex structures on Three-point space}
\author{Suvrajit Bhattacharjee}
\address{Stat-Math Unit\\ Indian Statistical Institute\\ 203, B.T. Road\\ Kolkata-700108}
\email{suvra.bh\_r@isical.ac.in}
\author{Debashish Goswami}
\address{Stat-Math Unit\\ Indian Statistical Institute\\ 203, B.T. Road\\ Kolkata-700108}
\email{goswamid@isical.ac.in}
\thanks{D.G. is partially supported by J.C. Bose National Fellowship and Research Grant awarded by D.S.T. (Govt. of India)}
\dedicatory{\textbf{Dedicated to (late) Prof. Hida, Prof. Accardi and Prof. Volovich}}
\begin{document}

\begin{abstract}
We discuss notions of almost complex, complex and K\"{a}hler structures in the realm of non-commutative geometry and investigate them for a class of finite dimensional spectral triples on the three-point space. We classify all the almost complex structures on this non-commutative manifold, which also turn out to be complex structures, but none of them are K\"{a}hler in our sense.
\end{abstract}

\maketitle

\section{Introduction}
Almost complex, complex and K\"{a}hler structures play an important role in  differential and Riemannian geometry. As a generalization of classical Riemannian geometry, non-commutative geometry was formulated by Alain Connes \cites{MR1303779,MR1482228}. In this context, it is important and natural to formulate analogues of almost complex, complex and K\"{a}hler structures. However, it is rather surprising that except for a few attempts \cite{MR1695097} in the early days of the subject and more recent works by a few authors \cites{re,MR3720811,MR3720811,MR3073899,MR2773332,MR2838520} not too much has been done in this direction. In this article, our humble aim is to propose some formulation of these notions and investigate them in a very simple, finite dimensional non-commutative manifold on the three-point space. We classify all the almost complex structures on this non-commutative manifold, which also turn out to be complex structures, but none of them are K\"{a}hler in our sense.

\section{Complex Structures}

We consider a space made of three points $Y=\{1,2,3\}$. The algebra $\mathcal{C}(Y)$ of continuous functions is the direct sum $\mathcal{C}(Y)=\mathbb{C} \oplus \mathbb{C} \oplus \mathbb{C}$ and any element $f \in \mathcal{C}(Y)$ is a triplet of complex numbers $(f_1,f_2,f_3)$, with $f_i=f(i)$ the value of $f$ at the point $i$. The functions $\chi_i$ defined by $\chi_i(j)=\delta_{ij}, \quad i,j=1,2,3$ form a $\mathbb{C}$-basis for $\mathcal{C}(Y)$.\\

The following is a simple conceptual tool that will be needed later on.

\begin{defn} \cite{MR3073899}
Let $A$ be a $\ast$-algebra and $E$ be an $A$ bi-module. The conjugate bi-module $\overline{E}$ is defined by declaring:
\begin{itemize}
\item[i)] $\overline{E}=E$ as abelian groups;
\item[ii)] We write $\overline{e}$ for an element $e \in E$ when we consider it as an element of $\overline{E}$;
\item[iii)] The bi-module operations for $\overline{E}$ are $a \cdot \overline{e}=\overline{e \cdot a^*}$ and $\overline{e} \cdot a=\overline{a^* \cdot e}.$
\end{itemize}
\end{defn}

\begin{rem}
If $\theta : E \rightarrow F$ is any morphism then we define $\overline{\theta} : \overline{E} \rightarrow \overline{F}$ by $\overline{\theta}(\overline{e})=\overline{\theta(e)}$.
\end{rem}

We now introduce the non-commutative analogue of K\"{a}hler differentials for commutative algebras.

\begin{defn}\cite{MR3073899}
Let $A$ be an arbitrary $\mathbb{C}$-algebra. Let $\Omega_{u}^1A$ denote the kernel of the multiplication map $\mu : A \otimes A \rightarrow A$. We then define the universal differential graded algebra over $A$ to be the tensor algebra \[\Omega_uA=T_A(\Omega_u^1A)\] endowed with the unique degree one derivation such that \[d(a):= 1 \otimes a - a \otimes 1\] for $a \in A$.
\end{defn}

\begin{defn}\cite{MR3073899}
A differential calculus or differential structure on $A$ is a differential graded algebra $(\Omega A, d)$ that is a quotient of $(\Omega_uA,d)$ by a differential graded ideal whose degree-zero component is zero.
\end{defn}

It follows that $\Omega A$ is generated by $A$ and $\Omega^1A$, and $\Omega^nA$ is the $\mathbb{C}$-span of \[\{\ a_0da_1\wedge \cdots \wedge da_n \mid a_0\dots a_n \in A\}.\]

The notion of a differential $\ast$-calculus first appeared in \cite{MR994499}.

\begin{defn}\cite{MR3073899}
A differential calculus $(\Omega A, d)$ on a $\ast$-algebra $A$ is compatible with the star operation on $A$ if the star operation on $\Omega^0A=A$ extends to an involution $\xi \mapsto \xi^*$ on $\Omega A$ that preserves the grading and has the property that $d(\xi)^*=d(\xi^*)$ and $(\xi \wedge \eta)^*=(-1)^{\lvert \xi \rvert \lvert\eta \rvert}\eta ^* \wedge \xi^*$ for all homogeneous $\eta, \xi \in \Omega A.$ When the conditions hold we call $(\Omega A, d, \ast)$ a differential $\ast$-calculus on $A$.
\end{defn}

\begin{rem}
The definition implies $(a\xi b)^*=b^* \xi ^* a^*$ for all $a,b \in A$ and $\xi \in \Omega A.$	
\end{rem}

\begin{prop}\cite{MR3073899}
The map $\star : \Omega A \rightarrow \overline{\Omega A}$ defined by $\star(\xi)=\overline{\xi^*}$ is an $A$-bimodule homomorphism.
\end{prop}

We will now produce a differential $\ast$-calculus on the $\ast$-algebra $\mathcal{C}(Y)$ of continuous functions on the three-point space.\\

The notion of a spectral triple is the non-commutative analogue of a manifold. It is a basic ingredient in non-commutative geometry as developed in \cite{MR823176}.

\begin{defn}\cites{MR1303779, MR1482228} \label{8}
A spectral triple $(A,H,D)$ is given by a $\ast$-algebra with a faithful representation $\pi : A \rightarrow B(H)$ on the Hilbert space $H$ together with a self-adjoint operator $D=D^*$ on $H$ with the following properties:

\begin{itemize}
\item[i)] The resolvent $(D-\lambda)^{-1}$, $\lambda \not \in \mathbb{R}$, is a compact operator on $H$;
\item[ii)] $[D,a]:= D\pi(a)-\pi(a)D \in B(H)$, for any $a \in A$.
\end{itemize}

\end{defn}

Given a spectral triple $(A,H,D)$ one constructs \cite{MR1482228} a compatible differential calculus on $A$ by means of a suitable representation of the universal algebra $\Omega_uA$ in the algebra of bounded operators on $H$. The map\[\pi_u : \Omega_uA \rightarrow B(H)\] given by \[\pi_u(a_0da_1\cdots da_p):= a_0[D,a_1]\cdots [D,a_p], \qquad a_j \in A\] is a $\ast$-homomorphism of algebras.

\begin{prop}\cite{MR1482228}
Let $J_0:= \oplus_p J_0^p$ be the graded two sided ideal of $\Omega_uA$ given by \[J^p_0:=\{ \omega \in \Omega^p_uA \mid \pi_u(\omega)=0\}.\] Then $J:=J_0+dJ_0$ is graded differential ideal of $\Omega_uA$.
\end{prop}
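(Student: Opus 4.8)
The plan is to verify the two defining properties of a graded differential ideal separately, namely that $J$ is a graded two-sided ideal of $\Omega_u A$ and that it is stable under $d$. The differential property will be essentially free: since $d^2=0$, we have $dJ = d(J_0 + dJ_0) = dJ_0 \subseteq J$, so once $J$ is known to be an ideal the inclusion $dJ \subseteq J$ follows immediately. The grading is also quick: because $J_0 = \bigoplus_p J_0^p$ is graded and $d$ raises degree by one, we get $J = \bigoplus_p (J_0^p + dJ_0^{p-1})$, which exhibits $J$ as a graded subspace with homogeneous component $J^p = J_0^p + dJ_0^{p-1}$ in degree $p$.

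The substantive point is therefore that $J$ is a two-sided ideal. I would use two facts repeatedly: first, that $J_0 = \ker \pi_u$ is already a two-sided ideal (because $\pi_u$ is an algebra homomorphism), so any product having a factor in $J_0$ lands back in $J_0$; and second, the graded Leibniz rule together with $d^2=0$. Since every element of $J$ is a sum of a term in $J_0$ and a term $d\omega$ with $\omega \in J_0$, and since $J_0$ already absorbs multiplication, it suffices to show $\Omega_u A \cdot (dJ_0) \subseteq J$ and $(dJ_0) \cdot \Omega_u A \subseteq J$.

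For the left action, take homogeneous $\xi \in \Omega_u A$ and $\omega \in J_0$. The Leibniz rule gives $d(\xi \wedge \omega) = d\xi \wedge \omega + (-1)^{|\xi|}\xi \wedge d\omega$, which I rearrange to $\xi \wedge d\omega = (-1)^{|\xi|}\big(d(\xi \wedge \omega) - d\xi \wedge \omega\big)$. Now $\xi \wedge \omega \in J_0$ since $J_0$ is an ideal, so $d(\xi \wedge \omega) \in dJ_0$; and $d\xi \wedge \omega \in J_0$ since $\pi_u(d\xi \wedge \omega) = \pi_u(d\xi)\,\pi_u(\omega) = 0$, using that $\pi_u$ is a homomorphism and $\omega \in \ker \pi_u$. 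Hence $\xi \wedge d\omega \in dJ_0 + J_0 = J$. The right action is entirely symmetric: from $d(\omega \wedge \xi) = d\omega \wedge \xi + (-1)^{|\omega|}\omega \wedge d\xi$ one gets $d\omega \wedge \xi = d(\omega \wedge \xi) - (-1)^{|\omega|}\omega \wedge d\xi$, where $\omega \wedge \xi \in J_0$ forces $d(\omega \wedge \xi) \in dJ_0$ and $\pi_u(\omega \wedge d\xi) = \pi_u(\omega)\,\pi_u(d\xi) = 0$ forces $\omega \wedge d\xi \in J_0$. Extending bilinearly over the grading then shows $J$ is a two-sided ideal, which completes the argument.

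I expect the only genuine subtlety to be bookkeeping rather than anything conceptual: one must remember to run the Leibniz argument on both sides and to invoke the homomorphism property of $\pi_u$ at exactly the point where $d\xi \wedge \omega$ (resp. $\omega \wedge d\xi$) is placed in $J_0$. The crucial structural observation making everything go through is that $\pi_u$ is multiplicative, so the kernel $J_0$ absorbs products, while the failure of $J_0$ itself to be $d$-stable is precisely what the extra summand $dJ_0$ is designed to repair.
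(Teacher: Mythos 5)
Your argument is correct and is essentially the standard one from the cited reference (the paper itself states this proposition with a citation and gives no proof): $J_0=\ker\pi_u$ is a two-sided ideal by multiplicativity of $\pi_u$, and the Leibniz rule rewrites $\xi\wedge d\omega$ and $d\omega\wedge\xi$ as elements of $dJ_0+J_0$. Nothing is missing.
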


\begin{rem}
The elements of $J$ are called junk forms. 
\end{rem}

\begin{defn}
The differential graded algebra of Connes' forms over the algebra $A$ is defined by \[\Omega_DA:=\Omega_uA/J \cong \pi_u(\Omega_uA)/\pi_u(J).\]
\end{defn}

\begin{rem}
This is a differential $\ast$-calculus on the $\ast$-algebra $A$.
\end{rem}

Let us now describe a spectral triple on the three-point space $Y=\{1,2,3\}$. This is a special case of a class of spectral triples considered in \cites{MR2327719,MR1727499} on compact metric spaces.

\begin{prop}\label{13} 
Put $A=\mathcal{C}(Y)$ and $H=\mathbb{C}^2_{12} \oplus \mathbb{C}^2_{23} \oplus \mathbb{C}^2_{13}$ (the subscript $ij$ says that the Hilbert space is along the ``edge" connecting the point $i$ with $j$). Define $\pi : A \rightarrow B(H)$ by \[\pi(f)=
\begin{bmatrix}
f(1) & 0\\
0 & f(2)\\
\end{bmatrix}_{12} \oplus 
\begin{bmatrix}
f(2) & 0\\
0 & f(3)\\
\end{bmatrix}_{23} \oplus
\begin{bmatrix}
f(1) & 0\\
0 & f(3)\\
\end{bmatrix}_{13},\] for $f \in A$. And finally, define the operator $D$ as \[D=
\begin{bmatrix}
0 & 1\\
1 & 0\\
\end{bmatrix}_{12} \oplus
\begin{bmatrix}
0 & 1\\
1 & 0\\
\end{bmatrix}_{23} \oplus
\begin{bmatrix}
0 & 1\\
1 & 0\\
\end{bmatrix}_{13}.\] Then $(A,H,D)$, as constructed above, is a spectral triple on the three-point space.
\end{prop}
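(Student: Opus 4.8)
The plan is to verify directly the three defining properties of a spectral triple from Definition \ref{8}, exploiting crucially that $H=\mathbb{C}^2_{12}\oplus\mathbb{C}^2_{23}\oplus\mathbb{C}^2_{13}$ is finite dimensional (six dimensional), which trivializes all the analytic conditions.

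First I would check that $\pi$ is a faithful $\ast$-representation. Linearity and multiplicativity are immediate, since each summand of $\pi(f)$ is a diagonal matrix whose entries are values of $f$, and the product of two such diagonal blocks carries the pointwise product $fg$. Because the adjoint of a diagonal matrix simply conjugates its diagonal entries, one has $\pi(f)^*=\pi(\bar f)=\pi(f^*)$, so $\pi$ is a $\ast$-homomorphism. For faithfulness, observe that each of the three points of $Y$ occurs as a diagonal entry in at least one block (point $1$ in the $12$- and $13$-blocks, point $2$ in the $12$- and $23$-blocks, point $3$ in the $23$- and $13$-blocks); hence $\pi(f)=0$ forces $f(1)=f(2)=f(3)=0$, i.e.\ $f=0$.

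Next, self-adjointness of $D$ follows because each block equals the real symmetric matrix $\begin{bmatrix} 0 & 1 \\ 1 & 0 \end{bmatrix}$, which is its own adjoint, and a finite direct sum of self-adjoint operators is self-adjoint. For the resolvent condition, the spectrum of $D$ consists of the eigenvalues $\pm 1$ of each block, all real; thus for any $\lambda\notin\mathbb{R}$ the operator $D-\lambda$ is invertible, and since $H$ is finite dimensional its inverse is automatically bounded and compact, so condition (i) holds. Likewise condition (ii) is automatic: every linear operator on a finite-dimensional space is bounded, so $[D,\pi(f)]\in B(H)$ for all $f\in A$.

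Finally, although the above already settles the claim, I would record the explicit form of the commutator, since it will be the basic object in the differential calculus constructed later. A short block computation gives, on the $ij$-edge,
\[
[D,\pi(f)]_{ij}=\bigl(f(j)-f(i)\bigr)\begin{bmatrix} 0 & 1 \\ -1 & 0 \end{bmatrix}_{ij},
\]
so the commutator encodes exactly the finite differences of $f$ across the three edges. No genuine obstacle arises in the proof: the finite-dimensionality of $H$ renders the compactness and boundedness requirements vacuous, and the only real content is the faithfulness of $\pi$ and the self-adjointness of $D$, both of which are immediate from the block structure.
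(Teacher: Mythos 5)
Your proof is correct and follows essentially the same route as the paper, which simply observes that finite dimensionality of $H$ makes the resolvent and boundedness conditions automatic and that $D$ is manifestly self-adjoint. Your additional verifications (faithfulness of $\pi$ via the occurrence of each point in some block, and the explicit commutator $[D,\pi(f)]_{ij}=(f(j)-f(i))\bigl[\begin{smallmatrix}0&1\\-1&0\end{smallmatrix}\bigr]$) are accurate and merely flesh out what the paper leaves implicit.
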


\begin{proof}
The conditions of \hyperref[8]{Definition \ref*{8}} are satisfied since $H$ is a finite dimensional Hilbert space ($D$ is manifestly self-adjoint).
\end{proof}

\begin{rem}
Let us denote the differential graded algebra of Connes' forms over the algebra $A=\mathcal{C}(Y)$ simply by $\Omega.$
\end{rem}

\begin{thm}\label{15}
Let $(A,H,D)$ be the spectral triple on the three-point space described in \hyperref[13]{Proposition \ref*{13}}. Then  $e_i=[D,\chi_i], i=1,2$ is a free right basis for $\Omega^1$. The bi-module structures are given by 
\begin{itemize}
\item $\chi_i e_i=e_i(1-\chi_i)$
\item $\chi_i e_j=-e_i \chi_j, \qquad i\neq j$
\end{itemize}
and 
\begin{itemize}
\item $e_i \chi_i=(1-\chi_i)e_i$
\item $e_i \chi_j=-\chi_i e_j \qquad i\neq j$
\end{itemize}
\end{thm}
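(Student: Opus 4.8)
The plan is to work entirely at the level of bounded operators, since in degree one there is no junk: because $\pi$ is faithful we have $J_0^0=0$, so $J^1=J_0^1=\ker\bigl(\pi_u|_{\Omega_u^1A}\bigr)$ and hence $\Omega^1\cong\pi_u(\Omega_u^1A)$ is literally the linear span of the operators $f\,[D,g]$, $f,g\in A$. First I would compute $[D,f]$ in each of the three blocks; a direct commutator calculation shows it is block-anti-diagonal, with block $ij$ equal to $\begin{bmatrix}0 & f(j)-f(i)\\ f(i)-f(j) & 0\end{bmatrix}$. Left multiplication by the diagonal operator associated to $g$ preserves this anti-diagonal shape, so $\Omega^1$ is contained in the six-dimensional space $V$ of operators that are anti-diagonal in every block (two free entries per block, three blocks).

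Specializing to $\chi_1,\chi_2$ gives explicit matrices for $e_1,e_2$; and since $\chi_1+\chi_2+\chi_3=1$, the Leibniz rule forces $e_1+e_2+e_3=[D,1]=0$, so $e_3=-(e_1+e_2)$ is redundant and two generators suffice. The core step is then to show that the right-module map $A^2\to V$, $(f,g)\mapsto e_1f+e_2g$, is a linear isomorphism. I would record the six anti-diagonal entries of $e_1f+e_2g$ as explicit linear expressions in $f(1),f(2),f(3),g(1),g(2),g(3)$ and invert the resulting $6\times 6$ system. The inversion is elementary and, being uniquely solvable, yields simultaneously that $e_1,e_2$ span $\Omega^1$ (forcing $\Omega^1=V$) and that they are linearly independent over $A$; hence $\{e_1,e_2\}$ is a free right basis.

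The bimodule relations are then the cleanest part and need no matrix computation: they follow from the graded Leibniz rule together with the idempotent identities $\chi_i\chi_j=\delta_{ij}\chi_i$. Applying $d$ to $\chi_i^2=\chi_i$ gives $e_i=(d\chi_i)\chi_i+\chi_i(d\chi_i)=e_i\chi_i+\chi_i e_i$, whence $\chi_i e_i=e_i(1-\chi_i)$ and $e_i\chi_i=(1-\chi_i)e_i$; applying $d$ to $\chi_i\chi_j=0$ for $i\neq j$ gives $e_i\chi_j+\chi_i e_j=0$, which is exactly the two cross relations $\chi_i e_j=-e_i\chi_j$ and $e_i\chi_j=-\chi_i e_j$.

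I anticipate the only real obstacle to be the bookkeeping in the central step: correctly identifying $\Omega^1$ with the full anti-diagonal space $V$ (for which the faithfulness of $\pi$ and the vanishing of junk in degree one are essential) and then verifying that the $6\times 6$ coefficient matrix is invertible. Everything else is either a direct commutator computation or a formal consequence of the Leibniz rule.
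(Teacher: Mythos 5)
Your proposal is correct and follows essentially the same route as the paper: the generators come from $\chi_1+\chi_2+\chi_3=1$ together with $[D,1]=0$, and the bimodule relations come from the Leibniz rule applied to $\chi_i^2=\chi_i$ and $\chi_i\chi_j=0$. The one place you go beyond the paper is the freeness of $\{e_1,e_2\}$, which the paper asserts without detail and which you verify by identifying $\Omega^1$ with the six-dimensional space of blockwise anti-diagonal operators and inverting the $6\times 6$ system; that computation does check out and usefully fills the gap.
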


\begin{proof}
We note that, by definition, the space of 1-forms consists of bounded operators on $H$ of the form $\sum_j a_0^j[D,a_1^j]$, where $a_i^j \in A$. We recall that $\chi_i, \quad i=1,2,3$ form a $\mathbb{C}$ basis of $A$ and satisfies $\chi_1+\chi_2+\chi_3=1$. Since $[D,1]=0$, we get the first conclusion. The bi-module structure follows from Leibniz rule and the observations $\chi_i^2=\chi_i$, $\chi_i\chi_j=0$. 
\end{proof}

\begin{rem}
The basis described above is also a left basis for $\Omega^1$.
\end{rem}

\begin{rem}
It can be shown by computation that there are no junk forms. Also the higher spaces of forms are finite dimensional vector spaces.
\end{rem}

The following definition is the beginning of non-commutative complex geometry and one can go very far mimicking the classical theory along the lines \cite{MR2093043}.

\begin{defn}\cites{MR3073899, MR3428362}
Let $(\Omega A, d, \ast)$ be a $\ast$-differential calculus on $A$. An almost complex structure on $(\Omega A, d, \ast)$ is a degree zero derivation $J : \Omega A \rightarrow \Omega A$ such that 
\begin{itemize}
\item[i)] $J$ is identically $0$ on $A$ and hence an $A$-bimodule endomorphism of $\Omega A;$
\item[ii)] $J^2=-1$ on $\Omega^1A;$ and
\item[iii)] $J(\xi^*)=J(\xi)^*$ for $\xi \in \Omega^1A$ ($J$ preserves $\ast$, i.e., $\overline{J} \star=\star J$).
\end{itemize}
\end{defn}

Because $J^2=-1$ on $\Omega^1A$, there is an $A$-bimodule decomposition \[\Omega^1A=\Omega^{1,0}A \oplus \Omega^{0,1}A\] where \[\Omega^{1,0}A=\{\ \omega \in \Omega^1A \mid J\omega=\iota \omega \}\] and \[\Omega^{0,1}A=\{\ \omega \in \Omega^1A \mid J\omega=-\iota \omega\}.\] 

Condition iii) implies $(\Omega^{0,1}A)^*=\Omega^{1,0}A.$ \footnote{The map $\star : \Omega^{1,0}A \rightarrow \overline{\Omega^{0,1}A}$ is an isomorphism of $A$ bi-modules.}\\

For all $p,q \geq0$ we define \[\Omega^{p,q}A:=\{\ \xi \in \Omega^{p+q}A \mid J\xi=(p-q)\iota \xi\}.\] Elements in $\Omega^{p,q}A$ are called $(p,q)$ forms. It is a theorem that \[\Omega^nA=\bigoplus_{p+q=n} \Omega^{p,q}A.\] For all pairs of non-negative integers $(p,q)$, let \[\pi^{p,q} : \Omega^{p+q}A \rightarrow \Omega^{p,q}A\] be the projections associated to the direct sum decomposition $\Omega^nA=\bigoplus_{p+q=n} \Omega^{p,q}A$.

\begin{rem}\label{19}
We call an endomorphism $J : \Omega^1A \rightarrow \Omega^1A$ satisfying the above conditions a first order almost complex structure. In most of the examples studied so far, one defines the endomorphism on $\Omega^1A$ and then extends it to whole of $\Omega A$ using the derivation property and some ``basis" of higher forms. In our case we use the free basis for $\Omega^1$ and vector space basis for higher forms.
\end{rem}

\begin{defn}\cites{MR3073899, MR3428362}
Let \[\partial : \Omega^{p,q}A \rightarrow \Omega^{p+1,q}A\] be the composition $\pi^{p+1,q}d : \Omega^{p,q}A \rightarrow \Omega^{p+1,q}A$. Let \[\bar{\partial} : \Omega^{p,q}A \rightarrow \Omega ^{p,q+1}A\] be the composition $\pi^{p,q+1}d : \Omega^{p,q}A \rightarrow \Omega^{p,q+1}A.$
\end{defn}

The following is the analogue of Newlander-Nirenberg theorem \cite{MR2093043}.

\begin{defn}\cites{MR3073899, MR3428362}
An almost complex structure $J$ on $(\Omega A, d, \ast)$ is integrable if $d\Omega^{1,0}A \subset \Omega^{2,0}A \oplus \Omega^{1,1}A$.
\end{defn}

\begin{defn}\cites{MR3073899, MR3428362}
A complex structure on $(\Omega A, d, \ast)$ is an almost complex structure $J$ which is integrable.
\end{defn}

Now we explicitly determine all the complex structures on the three-point space.

\begin{thm}\label{23}
Let $(A,H,D)$ be the spectral triple on the three-point space as described in \hyperref[13]{Proposition \ref*{13}}. Then there are 8 complex structures for the calculus obtained in \hyperref[15]{Theorem \ref*{15}}, as enumerated below:
\begin{itemize}
\item 
$i\begin{bmatrix}
1-2\chi_3 & 0\\
2\chi_1 & 2\chi_2-1
\end{bmatrix}, \qquad
-i\begin{bmatrix}
1-2\chi_3 & 0\\
2\chi_1 & 2\chi_2-1
\end{bmatrix}$;\\
\item 
$i\begin{bmatrix}
1-2\chi_3 & -2\chi_2\\
2\chi_1 & 2\chi_3-1
\end{bmatrix}, \qquad
-i\begin{bmatrix}
1-2\chi_3 & -2\chi_2\\
2\chi_1 & 2\chi_3-1
\end{bmatrix}$;\\
\item 
$i\begin{bmatrix}
2\chi_1-1 & 2\chi_2\\
0 & 1-2\chi_3
\end{bmatrix}, \qquad
-i\begin{bmatrix}
2\chi_1-1 & 2\chi_2\\
0 & 1-2\chi_3
\end{bmatrix}$;\\
\item
$i\begin{bmatrix}
2\chi_1 & 0\\
0 & 1-2\chi_2
\end{bmatrix}, \qquad
-i\begin{bmatrix}
2\chi_1 & 0\\
0 & 1-2\chi_2
\end{bmatrix}.$
\end{itemize}
\end{thm}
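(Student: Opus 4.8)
The plan is to split the statement into two parts: first classify all first-order almost complex structures on $\Omega^1$ (bimodule endomorphisms $J$ with $J^2=-1$ and $\overline{J}\star=\star J$), showing there are exactly eight, and then verify that each of these extends to a genuine complex structure, i.e. is integrable.

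For the classification I would exploit the idempotents. Since $A=\mathbb{C}\chi_1\oplus\mathbb{C}\chi_2\oplus\mathbb{C}\chi_3$ with $1=\chi_1+\chi_2+\chi_3$, the bimodule decomposes as $\Omega^1=\bigoplus_{i,j}\chi_i\Omega^1\chi_j$. Using the relations of \hyperref[15]{Theorem \ref*{15}} together with $\chi_i\chi_j=\delta_{ij}\chi_i$, I would compute each corner explicitly; the outcome is that the diagonal corners $\chi_i\Omega^1\chi_i$ vanish while each off-diagonal corner $\chi_i\Omega^1\chi_j$ $(i\neq j)$ is one-dimensional, spanned by a single element (for instance $e_1\chi_2$, $e_2\chi_1$, and so on). Any bimodule endomorphism preserves every corner, so on each one-dimensional corner it acts as a scalar $\lambda_{ij}$; thus $J$ is completely encoded by six numbers $\lambda_{ij}$, $i\neq j$. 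The condition $J^2=-1$ forces $\lambda_{ij}^2=-1$, i.e. $\lambda_{ij}\in\{\pm i\}$, while the $\ast$-compatibility condition, via $\chi_i^*=\chi_i$ and $(\chi_i\omega\chi_j)^*=\chi_j\omega^*\chi_i$, identifies the corner $(i,j)$ with $(j,i)$ conjugate-linearly and yields $\lambda_{ji}=\overline{\lambda_{ij}}=-\lambda_{ij}$. Hence the data reduces to one independent sign $\pm i$ for each of the three unordered pairs $\{1,2\},\{1,3\},\{2,3\}$, giving exactly $2^3=8$ first-order almost complex structures. Re-expressing each choice of $(\lambda_{ij})$ in the free right basis $\{e_1,e_2\}$, by writing $e_1,e_2$ in terms of the corner generators and applying $J$ cornerwise, produces the eight matrices listed, paired by the global sign flip $J\mapsto-J$.

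It then remains to show each is integrable, i.e. $d\Omega^{1,0}\subseteq\Omega^{2,0}\oplus\Omega^{1,1}$. I would first extend $J$ to a degree-zero derivation of $\Omega$; this is forced, and on a product $\omega\wedge\eta$ of corner elements with $\omega\in\chi_k\Omega^1\chi_m$ and $\eta\in\chi_m\Omega^1\chi_l$ the Leibniz rule gives $(\lambda_{km}+\lambda_{ml})\,\omega\wedge\eta$. The crucial computation is the second-order calculus. Realising $\Omega^2$ inside $B(H)$ as the span of the operators $\pi(f)[D,\chi_i][D,\chi_j]$ (there is no junk, by the Remark), one checks that these are all diagonal operators; consequently the left and right $A$-actions coincide on $\Omega^2$, so only the diagonal corners $\chi_k\Omega^2\chi_k$ survive. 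Each such corner is spanned by the ``returning'' products built from corners $(k,m)$ and $(m,k)$ of $\Omega^1$, on which the extended $J$ acts with eigenvalue $\lambda_{km}+\lambda_{mk}=0$ by the $\ast$-constraint. Therefore $\Omega^2=\Omega^{1,1}$ and $\Omega^{2,0}=\Omega^{0,2}=0$; in particular the obstruction space $\Omega^{0,2}$ is trivial, so $d\Omega^{1,0}\subseteq\Omega^2=\Omega^{2,0}\oplus\Omega^{1,1}$ holds automatically and every almost complex structure is a complex structure.

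I expect the main obstacle to be the explicit determination of $\Omega^2$ and its bidegree decomposition, rather than the corner bookkeeping for $\Omega^1$. The classification is essentially forced once one sees that bimodule maps act by scalars on one-dimensional corners; the real content is verifying that $\Omega^2$ collapses onto the diagonal corners and is purely of type $(1,1)$, which is precisely what makes integrability automatic. A secondary point requiring care is confirming that the derivation extension of each first-order $J$ is well-defined on $\Omega^2$ (compatibility with the relations, equivalently absence of junk), since this is what licenses treating $J$ cornerwise on two-forms.
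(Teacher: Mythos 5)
Your proposal is correct, and it reaches the conclusion by a genuinely different route from the paper. The paper works entirely in the free right basis $e_1,e_2$: it writes $Je_i=\sum_k e_kJ_{ki}$ with $J_{ki}\in A$, imposes left-linearity, $J^2=-1$ and $\ast$-compatibility as explicit equations in the coefficients $J_{ki}$, solves these to obtain the eight matrices, and then verifies integrability separately for each $J$ by computing $\Omega^{1,0}$ and checking that applying $d$ and then $J$ gives zero. Your Peirce-type decomposition $\Omega^1=\bigoplus_{i\neq j}\chi_i\Omega^1\chi_j$ into six one-dimensional corners (the diagonal corners do vanish, as one checks from the relations of Theorem \ref{15}) replaces that system of equations by the observation that a bimodule map acts by a scalar $\lambda_{ij}$ on each corner, with $J^2=-1$ forcing $\lambda_{ij}=\pm i$ and $\ast$-compatibility forcing $\lambda_{ji}=\overline{\lambda_{ij}}=-\lambda_{ij}$; the count $2^3=8$ then drops out conceptually, where the paper can only report that ``surprisingly'' no further solutions exist. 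Your integrability argument is likewise uniform and strictly stronger: since all products $[D,\chi_i][D,\chi_j]$ are diagonal operators, the two $A$-actions coincide on $\Omega^2$, only the returning products survive, and the derivation extension acts there by $\lambda_{km}+\lambda_{mk}=0$, so $\Omega^2=\Omega^{1,1}$ for every one of the eight structures and integrability is automatic rather than a case-by-case check. Two points deserve explicit attention in a final write-up. First, the well-definedness of the derivation extension, which you flag, follows immediately from your own bookkeeping: the kernel of the multiplication map $\chi_k\Omega^1\chi_m\otimes_{\mathbb{C}}\chi_m\Omega^1\chi_l\to\Omega^2$ sits inside a single eigenspace of $J\otimes 1+1\otimes J$, hence is preserved; say this, since the paper is silent on it. Second, carrying out your change of basis actually yields $\pm i\begin{bmatrix}2\chi_1-1&0\\0&1-2\chi_2\end{bmatrix}$ for the fourth pair, not the displayed $\pm i\begin{bmatrix}2\chi_1&0\\0&1-2\chi_2\end{bmatrix}$ (the latter fails $J^2=-1$ under the stated convention $Je_i=\sum_k e_kJ_{ki}$, while the paper's own description of $\Omega^{1,0}$ for this $J$ matches the corrected matrix); so your method would not literally reproduce the list as printed, but rather exposes what appears to be a typographical error in the statement.
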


\begin{proof}
By \hyperref[19]{Remark \ref*{19}}, we let $J$ be a first order almost complex structure. Let $Je_i=e_1J_{1i}+e_2J_{2i}, \quad i=1,2$ and extend right linearly. Then $J$ is a left module morphism reads as $J(\chi_ie_j)=\chi_iJ(e_j)$ (we use the bi-module rules to take the $\chi_i$ to the other side). In coordinates, $J^2=-1$ is $\sum_{j,k}e_kJ_{kj}J_{ji}=e_i$. Finally, $J$ preserves $\ast$ reads as $\sum_j e_jJ_{ji}=\sum_j\overline{J_{ji}}e_j$.\\

Now comparing coefficients and solving for $J_{ij}$ from the above equations, we get the first order almost complex structures. Surprisingly, there are no more first order almost complex structures than the listed ones. Next we extend these according to \hyperref[19]{Remark \ref*{19}} and note that the restriction of any almost complex structure to the space of one forms has to be one of these and since there is only one way to (derivation property) extend the first order ones, we get all the almost complex structures.\\

For the integrability condition, we check it individually case by case. For example, for $J=i
\begin{bmatrix}
2\chi_1 & 0\\
0 & 1-2\chi_2
\end{bmatrix}$, $\Omega^{1,0}$ consists of elements of the form $\alpha(e_1\chi_1)+\beta(e_2\chi_1)+\gamma(e_2\chi_3), \quad \alpha, \beta, \gamma \in \mathbb{C}$.\\ 

We apply $D$ to an element of that form, followed by $J$, and find that the result is $0$, i.e., the element lies in $\Omega^{1,1}$. So this $J$ is integrable. And similarly for the other $J$'s, which concludes the proof.
\end{proof}

\section{K\"{a}hler Structures}

Let us now go over to a possible non-commutative version of K\"{a}hler geometry. We begin with a basic ingredient in the classical theory, namely, that of a metric.

\begin{defn}
Let $(\Omega A,d,\ast)$ be a $\ast$-differential calculus on $A$. A metric $g$ on $A$ is a non-degenerate\footnote{Non-degeneracy means $g(\xi, \_)$ induces a right $A$ module isomorphism $\Omega^1A \rightarrow (\Omega^1A)'$, the $()'$ denoting the right $A$ dual.} bi-module morphism \[g : \Omega^1 A \otimes_A \Omega A^1 \rightarrow A\] and a hermitian metric is a non-degenerate bi-module morphism \[h : \Omega^1A \otimes_A \overline{\Omega^1A} \rightarrow A.\] 
\end{defn}

\begin{rem}
Given a hermitian metric $h : \Omega^1A \otimes_A \overline{\Omega^1A} \rightarrow A$, we get a metric $g : \Omega^1 A \otimes_A \Omega A^1 \rightarrow A$ by defining $g=h\circ(id \otimes \star)$. We say $h$ is induced by $g$.
\end{rem}

\begin{rem}
Assume that $\Omega^1A$ is projective as a left module. Then using a dual basis, we get a bi-module morphism \[\mathfrak{g} : A \rightarrow \Omega^1 A \otimes_A \Omega A^1.\] We identify the morphism with the element $\mathfrak{g}(1) \in \Omega^1 A \otimes_A \Omega A^1$. We call this element the inverse pf $g$
\end{rem}

Along the classical lines we have,

\begin{defn}
Let $(\Omega A,d,\ast)$ be a $\ast$-differential calculus with almost complex structure $J$. We say that a metric $g : \Omega^1 A \otimes_A \Omega A^1 \rightarrow A$ (respectively, a hermitian metric $h : \Omega^1A \otimes_A \overline{\Omega^1A} \rightarrow A$) is compatible with $J$ if $g \circ (J \otimes J)=g$ (respectively, $h \circ (J \otimes \overline{J})=h$).
\end{defn}

\begin{rem}
If $g$ is induced by $h$ and $h$ is compatible with $J$, then $g$ is automatically compatible with $J$.
\end{rem}

\begin{rem}
If a metric $g$ is compatible with $J$ then $g$ is identically $0$ on $\Omega^{1,0}A \otimes_A \Omega^{1,0}$ and $\Omega^{0,1}A \otimes_A \Omega^{0,1}A$.
\end{rem}

The very algebraic definition given in \cite{MR2093043} has the following counterpart.

\begin{defn}\label{30}
Let $(\Omega A,d,\ast)$ be a $\ast$-differential calculus with a complex structure $J$ and $g$ be a compatible metric on $A$. Assume $\Omega^1A$ is projective as a left module. Let $\mathfrak{g} \in \Omega^1 A \otimes_A \Omega A^1$ be the corresponding inverse of $g$. Then the fundamental form $\omega$ is defined to be the form $\wedge(J \otimes id)(\pi^{1,0}\otimes \pi^{0,1})(\mathfrak{g}) \in \Omega^{1,1}A$. The metric is said to be K\"{a}hler if $d\omega=0$.
\end{defn}

Unfortunately, for the three-point space we have the following

\begin{thm}
For the spectral triple $(A,H,D)$ on the three-point space as above in \hyperref[13]{Proposition \ref*{13}} and the calculus obtained in \hyperref[15]{Theorem \ref*{15}}, there are no compatible K\"{a}hler metric for the complex structures enumerated in \hyperref[23]{Theorem \ref*{23}}.
\end{thm}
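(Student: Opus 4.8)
The plan is to show that for each of the eight complex structures $J$ of \hyperref[23]{Theorem~\ref*{23}}, no compatible metric can satisfy $d\omega=0$, where $\omega$ is the associated fundamental form of \hyperref[30]{Definition~\ref*{30}}. Since the eight structures occur in four conjugate pairs $\{J,-J\}$, and passing from $J$ to $-J$ only interchanges the roles of $\Omega^{1,0}A$ and $\Omega^{0,1}A$, the two fundamental forms are related by the $\ast$-operation on $\Omega A$; as $d$ commutes with $\ast$, the K\"{a}hler condition holds for $J$ precisely when it holds for $-J$, so it is enough to treat the four representatives.

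Before handling the cases I would fix the differential data. Because $e_i=d\chi_i$ and $d^2=0$ we have $de_i=0$, while the Leibniz rule gives $d(\chi_i e_j)=e_i\wedge e_j$; combined with the bimodule relations of \hyperref[15]{Theorem~\ref*{15}} this determines $d$ on every $1$-form. Using that there are no junk forms, I would pin down $\Omega^2A$ explicitly together with the linear relations among the products $e_k\wedge e_l$, so that $d$ of any element of $\Omega^{1,1}A$ can be read off.

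For a fixed representative $J$, I would write down bases for the eigenspaces in the decomposition $\Omega^1A=\Omega^{1,0}A\oplus\Omega^{0,1}A$, and then parametrise all compatible metrics. A compatible non-degenerate bimodule map $g:\Omega^1A\otimes_A\Omega^1A\to A$ vanishes on $\Omega^{1,0}A\otimes_A\Omega^{1,0}A$ and on $\Omega^{0,1}A\otimes_A\Omega^{0,1}A$, so it is governed by its restrictions to the two mixed summands, reducing to a handful of scalar parameters constrained by the invertibility that non-degeneracy demands. Since $\Omega^1A$ is free, hence projective, as a left module (the remark after \hyperref[15]{Theorem~\ref*{15}}), each such $g$ yields an inverse element $\mathfrak{g}\in\Omega^1A\otimes_A\Omega^1A$ via a dual basis; I would extract its $(1,0)\otimes(0,1)$ component through $\pi^{1,0}\otimes\pi^{0,1}$, apply $\wedge(J\otimes\mathrm{id})$ to obtain $\omega$, and finally compute $d\omega$ as a function of the parameters, verifying that it is a nonzero element of $\Omega^3A$ for every admissible choice.

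The step I expect to be the main obstacle is the bookkeeping across the tensor product $\otimes_A$: the left and right $A$-actions on $\Omega^1A$ differ (\hyperref[15]{Theorem~\ref*{15}}), so moving an idempotent $\chi_i$ from one tensor leg to the other, as well as computing the dual basis that produces $\mathfrak{g}$ from $g$, must be done with care. Once $\omega$ is correctly in hand, the differential $d\omega$ is a routine application of $d\chi_i=e_i$, $de_i=0$ and the relations in $\Omega^2A$; the substance of the theorem is then the assertion that, under the non-degeneracy constraints on the parameters, this expression can never be made to vanish, which I would confirm case by case.
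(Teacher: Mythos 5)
Your proposal follows essentially the same scheme as the paper's proof: parametrize the $J$-compatible non-degenerate metrics via the free basis $e_1,e_2$ (so $g$ is determined by an invertible matrix $[g_{ij}]$), invert to obtain $\mathfrak{g}=\sum_i e_i\otimes\sum_j e_jg^{ji}$, form $\omega$ by projecting and applying $J$ (which is just multiplication by $i$ on the $(1,0)$-part), and check case by case that $d\omega=0$ is incompatible with non-degeneracy. The one caveat is your reduction from eight cases to four: since the paper's notion of metric carries no hermitian or reality condition, $(\pi^{0,1}\otimes\pi^{1,0})(\mathfrak{g})$ need not be the $\ast$-conjugate of $(\pi^{1,0}\otimes\pi^{0,1})(\mathfrak{g})$, so the fundamental forms of $J$ and $-J$ are a priori unrelated and you should either verify all eight cases or first show that the compatible $\mathfrak{g}$'s you obtain are themselves $\ast$-symmetric.
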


\begin{proof}
This is also proved by case by case analysis. We outline the overall scheme.\\

We first find $J$ compatible metric $g$. Let $g(e_i,e_j)=g_{ij}$. Since $e_i, i=1,2$ is both a right and left basis for $\Omega^1$ and $g$ is a bi-module morphism, $g$ is determined by $g_{ij}$'s. The non-degeneracy condition turns into the invertibility of the matrix $\begin{bmatrix} g_{11} & g_{12} \\ g_{21} & g_{22}\end{bmatrix}$ (since $A$ is commutative, this is equivalent to the determinant $g_{11}g_{22}-g_{12}g_{21}$ being an unit).\\

Then the compatibility condition reads as $g_{ij}=g(\sum_ke_kJ_{ki},\sum_le_lJ_{lj})$. We take the $J_{ki}$'s to the right entry of $g$ and then use the bi-module rule to get those out of $g$. Solving, we get $J$ compatible metrics.\\

The inverse $\mathfrak{g}$ of $g$ takes the form $\sum_i e_i \otimes (\sum_je_jg^{ji})$, where $\begin{bmatrix}g^{11} & g^{12}\\ g^{21} & g^{22}\end{bmatrix}$ is the inverse of $\begin{bmatrix} g_{11} & g_{12} \\ g_{21} & g_{22}\end{bmatrix}$.\\

According to \hyperref[30]{Defintion \ref*{30}}, we apply the projections and then $J$ on the first tensorand (which is just multiplication by $i$) and multiply them, thus getting the fundamental form. The condition $d\omega=0$ then contradicts the non-degeneracy condition of $g$ as can be seen by direct computation.
\end{proof}

\begin{rem}
In the paper \cite{re} a similar definition of the fundamental form was proposed. But in \cite{MR3720811}, it is defined more conceptually and many of the standard classical theorems are proved in that paper. It is of interest to know whether the three point space admits a K\"{a}hler structure in the set-up of the paper \cite{MR3720811}.
\end{rem}

\begin{rem}
It remains to study several other examples. The quantum homogeneous spaces arising from quantum groups are studied in \cites{MR3428362, MR3720811} and are still being studied by several people. The Podles' sphere \cite{MR919322} and higher projective spaces are studied in \cites{MR2838520, MR2773332}. We wish to study the non-commutative complex geometry on the very basic example of a non-commutative manifold, namely, the non-commutative torus \cite{MR623572}. There has not been done much for this particular example except for the papers \cites{MR1977884, MR2054986, MR3282309} that we know of.
\end{rem}

\begin{bibdiv}
\begin{biblist}

\bib{re}{article}{
	author={\'O Buachalla, R\'eamonn},
	title={Noncommutative Kahler Geometry of the Standard Podles Sphere},
	journal={arXiv preprint arXiv:\textbf{1401.1459}},
	year={2014}
}
	
\bib{MR3428362}{article}{
	author={\'O Buachalla, R\'eamonn},
	title={Noncommutative complex structures on quantum homogeneous spaces},
	journal={J. Geom. Phys.},
	volume={99},
	date={2016},
	pages={154--173},
	issn={0393-0440},
	review={\MR{3428362}},
	doi={10.1016/j.geomphys.2015.10.003},
}
	
\bib{MR3720811}{article}{
	author={\'O Buachalla, R\'eamonn},
	title={Noncommutative K\"ahler structures on quantum homogeneous spaces},
	journal={Adv. Math.},
	volume={322},
	date={2017},
	pages={892--939},
	issn={0001-8708},
	review={\MR{3720811}},
	doi={10.1016/j.aim.2017.09.031},
}

\bib{MR3073899}{article}{
	author={Beggs, Edwin},
	author={Paul Smith, S.},
	title={Non-commutative complex differential geometry},
	journal={J. Geom. Phys.},
	volume={72},
	date={2013},
	pages={7--33},
	issn={0393-0440},
	review={\MR{3073899}},
	doi={10.1016/j.geomphys.2013.03.018},
}

\bib{MR2327719}{article}{
	author={Christensen, Erik},
	author={Ivan, Cristina},
	title={Sums of two-dimensional spectral triples},
	journal={Math. Scand.},
	volume={100},
	date={2007},
	number={1},
	pages={35--60},
	issn={0025-5521},
	review={\MR{2327719}},
	doi={10.7146/math.scand.a-15015},
}

\bib{MR823176}{article}{
	author={Connes, Alain},
	title={Noncommutative differential geometry},
	journal={Inst. Hautes \'Etudes Sci. Publ. Math.},
	number={62},
	date={1985},
	pages={257--360},
	issn={0073-8301},
	review={\MR{823176}},
}

\bib{MR1303779}{book}{
	author={Connes, Alain},
	title={Noncommutative geometry},
	publisher={Academic Press, Inc., San Diego, CA},
	date={1994},
	pages={xiv+661},
	isbn={0-12-185860-X},
	review={\MR{1303779}},
}

\bib{MR1695097}{article}{
	author={Fr\"ohlich, J.},
	author={Grandjean, O.},
	author={Recknagel, A.},
	title={Supersymmetric quantum theory and non-commutative geometry},
	journal={Comm. Math. Phys.},
	volume={203},
	date={1999},
	number={1},
	pages={119--184},
	issn={0010-3616},
	review={\MR{1695097}},
	doi={10.1007/s002200050608},
}

\bib{MR2093043}{book}{
	author={Huybrechts, Daniel},
	title={Complex geometry},
	series={Universitext},
	note={An introduction},
	publisher={Springer-Verlag, Berlin},
	date={2005},
	pages={xii+309},
	isbn={3-540-21290-6},
	review={\MR{2093043}},
}

\bib{MR2773332}{article}{
	author={Khalkhali, Masoud},
	author={Landi, Giovanni},
	author={van Suijlekom, Walter Dani\"el},
	title={Holomorphic structures on the quantum projective line},
	journal={Int. Math. Res. Not. IMRN},
	date={2011},
	number={4},
	pages={851--884},
	issn={1073-7928},
	review={\MR{2773332}},
	doi={10.1093/imrn/rnq097},
}

\bib{MR2838520}{article}{
	author={Khalkhali, Masoud},
	author={Moatadelro, Ali},
	title={Noncommutative complex geometry of the quantum projective space},
	journal={J. Geom. Phys.},
	volume={61},
	date={2011},
	number={12},
	pages={2436--2452},
	issn={0393-0440},
	review={\MR{2838520}},
	doi={10.1016/j.geomphys.2011.08.004},
}

\bib{MR3282309}{article}{
	author={Khalkhali, Masoud},
	author={Moatadelro, Ali},
	title={A Riemann-Roch theorem for the noncommutative two torus},
	journal={J. Geom. Phys.},
	volume={86},
	date={2014},
	pages={19--30},
	issn={0393-0440},
	review={\MR{3282309}},
	doi={10.1016/j.geomphys.2014.06.005},
}

\bib{MR1482228}{book}{
	author={Landi, Giovanni},
	title={An introduction to noncommutative spaces and their geometries},
	series={Lecture Notes in Physics. New Series m: Monographs},
	volume={51},
	publisher={Springer-Verlag, Berlin},
	date={1997},
	pages={xii+200},
	isbn={3-540-63509-2},
	review={\MR{1482228}},
}

\bib{MR919322}{article}{
	author={Podle\'s, P.},
	title={Quantum spheres},
	journal={Lett. Math. Phys.},
	volume={14},
	date={1987},
	number={3},
	pages={193--202},
	issn={0377-9017},
	review={\MR{919322}},
	doi={10.1007/BF00416848},
}

\bib{MR2054986}{article}{
	author={Polishchuk, A.},
	title={Classification of holomorphic vector bundles on noncommutative
		two-tori},
	journal={Doc. Math.},
	volume={9},
	date={2004},
	pages={163--181},
	issn={1431-0635},
	review={\MR{2054986}},
}

\bib{MR1977884}{article}{
	author={Polishchuk, A.},
	author={Schwarz, A.},
	title={Categories of holomorphic vector bundles on noncommutative
		two-tori},
	journal={Comm. Math. Phys.},
	volume={236},
	date={2003},
	number={1},
	pages={135--159},
	issn={0010-3616},
	review={\MR{1977884}},
	doi={10.1007/s00220-003-0813-9},
}

\bib{MR623572}{article}{
	author={Rieffel, Marc A.},
	title={$C^{\ast} $-algebras associated with irrational rotations},
	journal={Pacific J. Math.},
	volume={93},
	date={1981},
	number={2},
	pages={415--429},
	issn={0030-8730},
	review={\MR{623572}},
}

\bib{MR1727499}{article}{
	author={Rieffel, Marc A.},
	title={Metrics on state spaces},
	journal={Doc. Math.},
	volume={4},
	date={1999},
	pages={559--600},
	issn={1431-0635},
	review={\MR{1727499}},
}

\bib{MR994499}{article}{
	author={Woronowicz, S. L.},
	title={Differential calculus on compact matrix pseudogroups (quantum
		groups)},
	journal={Comm. Math. Phys.},
	volume={122},
	date={1989},
	number={1},
	pages={125--170},
	issn={0010-3616},
	review={\MR{994499}},
}

\end{biblist}
\end{bibdiv}

\end{document}